\newtheorem{theorem}{Theorem}[section]
\newtheorem{lemma}[theorem]{Lemma}
\newtheorem{corollary}[theorem]{Corollary}
\newtheorem{proposition}[theorem]{Proposition}
\newtheorem{definition}[theorem]{Definition}
\newtheorem{remark}[theorem]{Remark}
\newcommand{\bbR}{\mathbb{R}}
\newcommand{\bbZ}{\mathbb{Z}}
\newcommand{\bbC}{\mathbb{C}}
\newcommand{\abs}[1]{\left| #1\right|}
\newcommand{\leqs}{\leqslant}
\newcommand{\geqs}{\geqslant}
\newcommand{\maps}{\longrightarrow}
\newcommand{\homeo}{\cong}
\newcommand{\isom}{\cong}
\newcommand{\cross}{\times}
\newcommand{\C}{\mathcal{C}}
\newcommand{\injects}{\hookrightarrow}
\newcommand{\G}{\mathcal{G}}
\newcommand{\D}{\mathcal{D}}
\def\colon{{:}\;}
\newcommand{\Hom}{\mathrm{Hom}}
\newcommand{\mO}{\mathcal{O}}
\newcommand{\A}{\mathcal{A}}
\newcommand{\srm}[1]{\stackrel{#1}{\maps}}
\newcommand{\End}{\mathrm{End}}
\newcommand{\cA}{\mathcal{A}}
\newcommand{\cG}{\mathcal{G}}
\newcommand{\fk}{\mathfrak{k}}
\newcommand{\Ad}{\mathrm{Ad}}
\newcommand{\wt}[1]{\widetilde{#1}}
\newcommand{\Isom}{\mathrm{Isom}}
\newcommand{\goesto}{\mapsto}
\newcommand{\Hilb}{{\bf H}}
\newcommand{\fu}{\mathfrak{u}}
\def\co{\colon\thinspace}
\begin{document}

\title[Invariant tubular neighborhoods and Yang--Mills theory]{Invariant tubular neighborhoods in infinite-dimensional Riemannian geometry, with applications to Yang--Mills theory}

\author{Daniel Ramras}
\address{Department of Mathematical Sciences,
New Mexico State University, Las Cruces, New Mexico 88003-8001, USA}
\email{ramras@nmsu.edu}

\thanks{This work was partially supported by
    NSF grants DMS-0353640 (RTG), DMS-0804553, and DMS-0968766.}

\subjclass[2000]{Primary 58B20, Secondary 53C07, 58D27}

\begin{abstract}
We show that if $G$ is an arbitrary group acting isometrically on an a (possibly infinite dimensional) Riemannian manifold, then every $G$--invariant submanifold with locally trivial normal bundle admits a $G$--invariant total tubular neighborhood.  These results apply, in particular, to the Morse strata of the Yang-Mills functional over a closed surface.  The resulting neighborhoods play an important role in calculations of gauge-equivariant cohomology for moduli spaces of flat connections over non-orientable surfaces (Baird \cite{Baird-rank3}, Ho--Liu \cite{Ho-Liu-anti-perfect}, Ho--Liu--Ramras \cite{Ho-Liu-Ramras}).
\end{abstract}  

\maketitle{}

\section{Introduction}
Consider a (possibly infinite-dimensional) smooth manifold $X$ acted on by a group $G$, and a smooth, $G$--invariant submanifold $Y\subset X$, which need not be closed.  The question we address is: does $Y$ have a $G$--invariant tubular neighborhood inside $X$?  For finite-dimensional manifolds and compact groups, it is well-known that the answer is yes (see Bredon \cite{Bredon-transf}, for example).  The proof for compact groups involves averaging over the group, and the problem appears to be much more difficult for non-compact groups.  A number of authors have considered specific infinite-dimensional versions of this problem: Ebin \cite{Ebin} constructed invariant tubular neighborhoods for orbits of the diffeomorphism group acting on the space of Riemannian metrics on a manifold; Kondracki and Rogulski \cite{KR} handled the case of orbits of the gauge group acting on the space of connections on a principal bundle; and recently A. Teleman \cite{Teleman-harmonic} gave an explicit construction
of a gauge-invariant tubular neighborhood for the space of connections $A$ on a Euclidean bundle satisfying 
$\mathrm{dim} (\ker (d_A)) = 1$. 

We will show that invariant tubular neighborhoods exist whenever $X$ is a Riemannian manifold, $G$ acts by isometries, and the orthogonal complement of $TY$ in $TX|_Y$ is a smooth subbundle (Proposition~\ref{tubular} and Theorem \ref{total-tubular}).  This last condition is automatically satisfied in the finite dimensional setting, but it is important to note that it does not come for free in the infinite-dimensional setting.  
Our motivation for this work comes from Yang--Mills theory, and we will show that these conditions are all satisfied in the case of the Morse strata of the Yang-Mills functional over a (possibly non-orientable) surface. 

Mathematical applications of Yang--Mills theory began with Atiyah and Bott \cite{A-B}, who 
established a recursive formula for the gauge-equivariant cohomology of
the space of semi-stable holomorphic structures on a Hermitian bundle $E$ over a
Riemann surface.  They achieved this by treating the Yang--Mills functional as a gauge-equivariant Morse function on the infinite-dimensional space $\A(E)$ of connections on $E$.  
In ordinary Morse theory on a manifold $M$, each critical point corresponds to a cell in $M$, whose dimension is the index of that critical point.  The zero-dimensional (co)homology of the critical point  contributes to the (co)homology of $M$, with a shift in dimension corresponding to the index.  Atiyah and Bott showed that the Morse strata of the Yang--Mills functional can be ordered, starting with the central (or semi-stable) stratum, so that each initial segment of the ordering forms an open set in $\A(E)$ (see Ramras \cite{Ramras-YM} for a detailed discussion).  Adding the strata $C_1, C_2, \ldots$ in order, the Thom isomorphism theorem shows that each stratum contributes its cohomology with a shift in dimension, given by the codimension of the stratum: letting $\C_n = \cup_{i = 1}^n C_i$, we have a long exact sequence
\begin{equation}\label{Thom}
 \cdots \to H^*_{\G(E)} (C_n, C_{n-1}) \isom H^{*-\mathrm{codim} (\C_{n})}_{\G(E)} \left(\C_{n}\right) \to H^*_{\G(E)} (\C_n)
\to H^*_{\G(E)} (\C_{n-1}) \to \cdots
\end{equation}
As discussed in Section~\ref{YM}, the $\G(E)$--invariant tubular neighborhoods constructed in this paper provide one method for establishing these Thom isomorphisms.  (Atiyah and Bott proved that the Yang--Mills functional is ``equivariantly perfect," meaning that the boundary maps in the sequence (\ref{Thom}) are all zero.  This leads to the recursive formula mentioned above.)

Ho and Liu \cite{Ho-Liu-non-orient} extended Yang--Mills theory to bundles $E$ over non-orientable surfaces, where the Yang--Mills
stratification is no longer ``equivariantly perfect." 
However, the Thom isomorphisms (\ref{Thom}), together 
with the orientability result of Ho, Liu, and Ramras \cite{Ho-Liu-Ramras},
establish equivariant Morse \emph{inequalities} for the space of flat connections on $E$ (see the introduction to \cite{Ho-Liu-Ramras}). 
Moreover, these Thom isomorphisms
have been used by Tom Baird \cite{Baird-rank3} to produce
formulas for the $U(3)$--equivariant Poincar\'{e} series of $\Hom(\pi_1
\Sigma, U(3))$ (where $\Sigma$ is a non-orientable surface),
establishing the ``anti-perfection'' conjecture of Ho and Liu \cite{Ho-Liu-anti-perfect}.

We note that over a Riemann surface, an alternate approach to the Thom isomorphism in (\ref{Thom}) would be to use the fact that after modding out the (based, complex) gauge group, one obtains a smooth algebraic variety, and results of Shatz \cite[Section 4]{Shatz} show that the image of each stratum is a smooth subvariety.  This algebraic approach involves various technicalities, which we will not attempt to resolve here.  

This paper is organized as follows.  In Section~\ref{inv}, we present our construction of invariant tubular neighborhoods, and in Section~\ref{YM} we use results of Daskalopoulos to show that our construction applies to the Morse strata of the Yang--Mills functional over a surface.  In the last section, we compare our construction of tubular neighborhoods to other constructions in the literature.

\vspace{.1in}

\noindent {\bf Acknowledgements:} I thank Robert Lipshitz and Chiu-Chu Melissa Liu for helpful conversations.  In particular, the construction of tubular neighborhoods presented here owes much to conversations with Lipshitz and Liu.  I also thank Chris Woodward for a helpful discussion regarding the connection with the work of Shatz.


\newpage

\section{Invariant Tubular Neighborhoods}$\label{inv}$

For basic definitions and terminology regarding infinite dimensional manifolds, as well as for many of the necessary constructions in this section, we follow Lang \cite{Lang-dg}.  By smooth we will always mean $C^{\infty}$.

Throughout this section, $X$ will denote a (possibly infinite-dimensional) Riemannian manifold, and $G$ will denote a group acting on $X$ by smooth isometries.  In more detail, this means $X$ is modeled on a Hilbert space $\Hilb$ (real or complex), and the tangent bundle $T(X)$ has a smoothly varying fiber-wise inner product (which we assume to be Hermitian in the complex case).  We will assume these inner products induce the correct topology on $T_x (X)$, or in other words, we assume that the Riemannian metric on $X$ is \emph{strong}.  We will view the action of $G$ as a homomorphism $G\to \Isom(X)$, where $\Isom(X)$ denotes the group of smooth isometries.  For the purposes of our arguments, both $G$ and $\Isom(X)$ can be viewed as \emph{discrete} groups.

Let $Y\subset X$ be a (locally closed), $G$--invariant submanifold; then the tangent space $T_y (Y)$ is a closed subspace of the Hilbert space
$T_y (X)$ and hence has an orthogonal complement $N_y (Y)$.  The normal bundle $N(Y)\subset T(X)|_Y$ is then a well-defined subset of $T(X)|_Y$, and since $G$ preserves the metric, it acts on $N(Y)$.  However, $N(Y)$ need not be a smooth subbundle in general.  For the construction of tubular neighborhoods given by Lang \cite[IV.5]{Lang-dg}, this is irrelevant: Lang obtains a tubular neighborhood diffeomorphic to the quotient bundle $T(X)|_Y/T(Y)$ by using a splitting of the projection $T(X)|_Y/T(Y)$, which always exists if $X$ admits partitions of unity \cite[III, Proposition 5.2]{Lang-dg}.

In this section, we show that if $N(Y)$ is a smooth subbundle of $T(X)|_Y$, then $Y$ admits a $G$--invariant total tubular neighborhood $\tau(Y)$ (Theorem~\ref{total-tubular}).  The neighborhoods $\tau(Y)$ are homeomorphic to the normal bundle $N(Y)$, but may not be diffeomorphic to $N(Y)$.  Since we are mainly interested in Thom isomorphisms, a homeomorphism is sufficient.  (If $G = \{1\}$, or if $Y$ consists of a single orbit of $G$, then we do obtain a diffeomorphism $\tau(Y) \homeo N(Y)$.)  

\begin{remark} The condition that $N(Y)\subset T(X)|_Y$ is a smooth subbundle could be replaced by the condition that there exists a splitting $s$ of the map $\pi$ in the exact sequence
$$0 \maps T(Y) \maps T(X)|_Y \srm{\pi} T(X)|_Y/T(Y)\maps 0$$
whose image in $T(X)|_Y$ is invariant under $G$; our proof goes through equally well in this case, with $\mathrm{Im} (s)$ playing the role of $N(Y)$.
\end{remark}

\subsection{The tangent bundle as a Riemannian manifold}

Associated to the metric on $X$ we have the metric spray $F: TX \to T(TX)$, which is equivariant with respect to the natural actions of the isometry group $\Isom(X)$.  The spray $F$ induces an exponential map $f\co \D \to X$ (Lang \cite[VII.7]{Lang-dg}), whose domain $\D$ is an open subset of $T(X)$ containing the zero section.  It follows from the construction of $f$ that $\D$ is invariant under $\Isom(X)$ and $f$ is $\Isom(X)$--equivariant.

In addition to the exponential map $f$, we will also need a $\Isom(X)$--invariant Riemannian metric on the tangent bundle $TX$, considered as a smooth manifold in its own right.  This Riemannian metric on $T(TX)$ can be defined by explicit local formulae as in Do Carmo \cite[Chapter 3, Exercise 2]{DoCarmo}.  Here we give a more global perspective, from which it is clear that the action of $\Isom(X)$ on $T(TX)$ preserves this metric.

Recall that there is a natural exact sequence of vector bundles over $TX$ of the form
\begin{equation} \label{exact-seq} 0 \maps \pi_{TX}^* (TX) \maps T(TX) \srm{\alpha} (\pi_{TX})^* TX\maps 0,
\end{equation}
where $\pi_{TX} \co TX \to X$ is the structure map for the tangent bundle $TX$, and $\alpha$ is defined via the commutative diagram
$$\xymatrix{ T(TX) \ar@/^/[rrd]^{D \pi_{TX}} \ar@/_/[ddr]_{\pi_{T(TX)}} \ar@{.>}[dr]^\alpha\\
		& (\pi_{TX})^* TX \ar[r] \ar[d] & TX \ar[d]^{\pi_{TX}} \\
		& TX \ar[r]^{\pi_{TX}} & X.
		}
$$
The map $\alpha$ is surjective, because in a local chart $U \subset X$, this diagram becomes (see Lang  \cite[p. 96]{Lang-dg})
$$\xymatrix{ (U\cross \Hilb) \cross (\Hilb\cross \Hilb) \ar@/^/[rrrd]^{(\pi_1, \pi_3)} \ar@/_/[ddr]_{(\pi_1, \pi_2)}
					\ar@{.>}[dr]|-{(\pi_1, \pi_2, \pi_3)}\\
		& U \cross \Hilb \cross \Hilb \ar[rr]^{(\pi_1, \pi_3)} \ar[d]^{(\pi_1, \pi_2)} & & U \cross \Hilb \ar[d]^{\pi_1} \\
		& U \cross \Hilb \ar[rr]^{\pi_1} & & U.
		}
$$
To identify the kernel of $\alpha$, consider the map
$$\pi_{TX}^* (TX) = TX \oplus TX \srm{\phi} T(TX)$$
sending a pair of vectors $(v, w)\in T_x X \oplus T_x X$ to the vector in $T_v (T_x X)$ represented by the curve $t\mapsto v+tw$ in $T_x X \subset TX$.  This defines a map of vector bundles over $TX$, and 
from the local representation given above, one sees that $\phi$ is a smooth, injective bundle map whose image is precisely $\ker (\alpha)$.

By naturality, the maps in the exact sequence (\ref{exact-seq}) are $\Isom(X)$--equivariant.  The next step will be to construct an $\Isom(X)$--equivariant splitting of $\alpha$.

Locally, the spray $F$ has the form
$$(x, v) \goesto (x, v, v, f_x (v))$$
where $f_x \co T_x (X)\to \Hilb$ is homogeneous of degree two, meaning that $f_x (tv) = t^2 f_x (tv)$ for any $v\in T_x (X)$ and any scalar $t$ \cite[IV, Proposition 3.2]{Lang-dg}. 
We now recall a well-known lemma (see, for example, \cite[I.3]{Lang-dg}), whose proof we include for completeness.

\begin{lemma} If $q\co E\to F$ is a smooth map between Banach spaces and $q$ is homogeneous of degree $p>0$ (meaning that $q(tx) = t^p q(x)$ for all $x\in E$ and all scalars $t$), then $q(x) = \frac{1}{p!} \left(D^p q(0)\right)(x,x,\ldots, x)$, where $D^p q$ is the $p^\textrm{th}$derivative of $q$, considered as a map from $E$ in the space of multilinear functions $E^p \to F$.  
\end{lemma}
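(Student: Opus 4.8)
The plan is to reduce the statement to a statement about a single homogeneous function and then extract its value from its Taylor expansion at the origin. The key fact is that for a smooth map $q$ that is homogeneous of degree $p$, the only surviving term in the Taylor expansion centered at $0$ is the degree-$p$ term, and that term is governed by the $p$-th derivative $D^p q(0)$.

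First I would observe that homogeneity forces the lower-order data at the origin to vanish. Specifically, differentiating the identity $q(tx) = t^p q(x)$ in $t$ repeatedly (or simply evaluating at $t = 0$) shows that $q(0) = 0$ and, more generally, that $D^k q(0) = 0$ for every integer $k$ with $0 \leqs k < p$. To see this, I would differentiate $q(tx)$ with respect to $t$ a total of $k$ times and evaluate at $t=0$; on the left this produces $\left(D^k q(0)\right)(x,x,\ldots,x)$ up to a constant, while on the right one gets a multiple of $t^{p-k}$, which vanishes at $t=0$ whenever $k < p$. Since $x$ is arbitrary and $D^k q(0)$ is a symmetric multilinear map, the vanishing on the diagonal $(x,\ldots,x)$ forces $D^k q(0) = 0$ by polarization.

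Next I would compute the degree-$p$ term directly. Applying the chain rule, the $p$-th derivative in $t$ of the map $t \mapsto q(tx)$ at a general $t$ equals $\left(D^p q(tx)\right)(x,x,\ldots,x)$, while differentiating the right-hand side $t^p q(x)$ exactly $p$ times gives $p!\, q(x)$. Evaluating both sides at $t = 0$ yields
$$
\left(D^p q(0)\right)(x,x,\ldots,x) = p!\, q(x),
$$
which rearranges to the claimed formula. This is really just the observation that a degree-$p$ homogeneous smooth function between Banach spaces is its own degree-$p$ Taylor polynomial.

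I expect the only genuinely delicate point to be justifying the interchange of differentiation with the evaluation map and the chain-rule computation of $\frac{d^p}{dt^p} q(tx)$ in the Banach-space setting, together with the standard fact that $D^p q(0)$ is symmetric so that the polarization argument in the previous step is valid. Both are standard in the calculus of smooth maps between Banach spaces (as developed in Lang \cite[I.3]{Lang-dg}), so the argument reduces to bookkeeping once the vanishing of $D^k q(0)$ for $k < p$ is established.
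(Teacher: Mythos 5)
Your proof is correct, but it takes a genuinely different route from the paper's. The paper argues by induction on $p$: differentiating $q(tx)=t^pq(x)$ once in $t$ and setting $t=1$ gives the Euler-type identity $q(x)=\frac{1}{p}\left(Dq(x)\right)(x)$; it then checks (by differentiating in an auxiliary parameter) that $Dq\co E\to L(E,F)$ is homogeneous of degree $p-1$, applies the induction hypothesis to $Dq$, and substitutes back. You instead differentiate $p$ times in $t$ and evaluate at $t=0$, using the iterated chain-rule identity $\frac{d^p}{dt^p}q(tx)=\left(D^pq(tx)\right)(x,\ldots,x)$ against $\frac{d^p}{dt^p}\left(t^pq(x)\right)=p!\,q(x)$. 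Your approach is more direct and gives a bit more for free --- the vanishing of $D^kq(0)$ for $k<p$, i.e.\ that $q$ is exactly its degree-$p$ Taylor term --- but it front-loads the work into the iterated chain-rule formula, which itself needs a small induction to justify in the Banach setting (you correctly flag this as the delicate point). The paper's induction only ever invokes the first derivative and a single application of the chain rule at each stage, at the price of the slightly fussier verification that $Dq$ is homogeneous of degree $p-1$. Also note that your first paragraph (vanishing of the lower-order derivatives via polarization) is not actually needed for the stated formula; your second paragraph alone suffices.
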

\begin{proof}  The proof is by induction on $p$. 
Differentiating the equation $q(tx) = t^p q(x)$ with respect to $t$ gives 
\begin{equation}\label{diff} \left(Dq(tx)\right) (x) = pt^{p-1} q(x).\end{equation}
Note that when $p=1$, the result follows by setting $t=0$, completing the base case.
Setting $t=1$ in (\ref{diff}), we have
\begin{equation} \label{q} q(x) =  \frac{1}{p} \left(Dq(x)\right)(x).  \end{equation}

Now, $Dq \co E\to L(E, F)$, is homogeneous of degree $p-1$: indeed, for any vector $v\in E$ we can differentiate the equation 
$$q(t(x + r v)) = t^p q(x + r v)$$
with respect to $r$, obtaining
$$\left(Dq(t(x+ r v))\right) (tv) = t^p \left(Dq(x+ r v) \right)(v);$$
setting $r = 0$ and using linearity of $Dq(tx)$, one finds that
$$\left(Dq(tx)\right) (v) = t^{p-1} \left(Dq (x)\right)(v)$$
for any scalar $t$ and any $x,v\in E$.

We can now apply the induction hypothesis to the degree $p-1$ homogeneous function $Dq$, yielding
$Dq(x) =  \frac{1}{(p-1)!} \left(D^p q(0)\right) (x,x,\ldots, x)$.  Plugging this into (\ref{q}) completes the proof.
\end{proof}

From the lemma, it follows that the quadratic function $f_x$ can be written in the form
$$f_x (v) = \frac{1}{2} \left(D^2 f_x (0)\right)(v,v),$$
meaning that $f$ is the quadratic part of the symmetric bilinear form $\frac 1 2 D^2 f_x (0)$.  Writing this bilinear form as $B(x, v, w) = \frac 1 2 \left(D^2 f_x (0)\right) (v,w)$, we obtain a splitting $s\co (\pi_{TX})^* TX = TX\oplus TX \to T(TX)$ of the exact sequence (\ref{exact-seq}) whose local form is 
$$s(x, v, w) = (x, v, w, B(x, v, w)).$$
Since the spray $F$ is $\Isom(X)$--equivariant and (by polarization) $B(x, -, -)$ is the unique symmetric bilinear form satisfying $B(x, v, v) = f_x (v)$, it follows that $s$ is itself $\Isom(X)$--equivariant.  Note also that $s$ is a map of vector bundles over $TX$.

The splitting $s$ gives us an explicit, $\Isom(X)$--equivariant direct sum decomposition of vector bundles over $TX$: 
$$T(TX) \isom \pi_{TX}^* (TX) \oplus \pi_{TX}^* (TX).$$
The Riemannian metric on $X$ pulls back to an inner product on $\pi_{TX}^* (TX)$ (which is again $\Isom(X)$--invariant) and now the direct sum decomposition endows $T(TX)$ with an $\Isom(X)$--invariant inner product as well, making $TX$ into a Riemannian manifold with an isometric action of the group $\Isom(X)$.

We note that there is another description of this splitting $s$ in terms of the Levi-Civita covariant derivative $D$ \cite[VIII, Theorem 4.2]{Lang-dg}: given $(v,w)\in T_x(X) \oplus T_x (X)$ with $w = \gamma'(0)$ for some curve $\gamma\co (-\epsilon, \epsilon) \to X$, there is a unique lift of $\gamma$ to a curve $\gamma_v: (-\epsilon, \epsilon) \to TX$ such that $\gamma(0) = v$ and $\gamma_v$ is parallel to $\gamma$ with respect to $D$.  The splitting $s$ now sends $(v, w)$ to $(\gamma_v)'(0) \in T_v (TX)$.  This splitting is the same as the one given above, as can be seen by examining the definition of $D$--parallel curves.

\subsection{Construction of the tubular neighborhood}

\begin{definition}$\label{tub-def}$  A \emph{tubular neighborhood} of $Y$ in $X$ is an open neighborhood $\tau(Y)$ of $Y$ in $X$ together with a homeomorphism 
$\phi\co U\to \tau(Y)$ from an open neighborhood $U\subset N(Y)$ containing the zero-section $Y\subset N(Y)$, such that $\phi$ restricts to the identity from $Y\subset N(Y)$ to $Y\subset X$.  If $\phi$ is a diffeomorphism, we call $\tau(Y)$ smooth, and if $U = N(X)$, we call $\tau$ \emph{total}.
\end{definition}

\begin{proposition}$\label{tubular}$  Let $X$ be a Riemannian manifold and let $Y\subset X$ be a (locally closed) submanifold whose normal bundle $N(Y)\subset T(X)|_Y$ is a smooth subbundle.  Assume $G$ acts on $X$ by smooth isometries, leaving $Y$ invariant.  Then there exists a $G$--invariant open neighborhood $Z$ of the zero section of $N(Y)$ with $Z\subset \D\cap N(Y)$, and the exponential map restricts to an equivariant diffeomorphism $f\co Z\to V$, where $V$ is an open neighborhood of $Y$ in $X$.  Hence $V$ is a $G$--invariant tubular neighborhood of $Y$.  
\end{proposition}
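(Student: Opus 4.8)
The plan is to realize the tubular neighborhood as the image under $f$ of a tube of $G$--invariant variable radius inside $N(Y)$, using that $f|_{N(Y)}$ is a local diffeomorphism along the zero section and exploiting the $G$--invariance of both the norm on $N(Y)$ and the distance function $d_X$ on $X$. Since $N(Y)$ is assumed to be a smooth subbundle of $T(X)|_Y$, it is a smooth submanifold of $TX$ and $f|_{N(Y)}$ is a smooth map. At a zero-section point $0_y$ the tangent space $T_{0_y} N(Y)$ splits canonically as $T_y(Y)\oplus N_y(Y)\cong T_y(X)$, and the standard properties of the spray give that the derivative of $f|_{N(Y)}$ at $0_y$ is the identity of $T_y(X)$; since $X$ is modeled on a Hilbert space and the metric is strong, the inverse function theorem applies, so $f|_{N(Y)}$ is a local diffeomorphism at each point of the zero section. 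Moreover $f(0_y)=y$, so $f$ carries the zero section identically onto $Y\subset X$. The set $\mcP$ of points of $N(Y)$ at which $f|_{N(Y)}$ is a local diffeomorphism is open, contains the zero section, and is $G$--invariant by equivariance of $f$.

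The main work is to cut $\mcP$ down to a $G$--invariant open tube on which $f$ is injective. The key observation is that, because $G$ acts by isometries and $f$ is $G$--equivariant, the norm $\|\cdot\|$ on $N(Y)$ and the distance $d_X$ on $X$ are both $G$--invariant. I would therefore define a pointwise tube-injectivity radius
$$ r(y) = \sup\{\delta > 0 : f \text{ is injective on } \{v \in N(Y) : \|v\| < \delta \text{ and } d_X(\pi(v), y) < \delta\}\}, $$
where $\pi\co N(Y) \to Y$ is the bundle projection. Local injectivity at $0_y$ makes $r(y)>0$, and the equivariance of $f$ together with the $G$--invariance of $\|\cdot\|$ and $d_X$ makes $r$ a $G$--invariant function on $Y$. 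Using again that $d_X$ is $G$--invariant, I would then replace $r$ by a $G$--invariant, continuous, and sufficiently slowly varying positive minorant $\epsilon\le r$, obtained as an invariant local infimum of $r$ over small $d_X$--balls, which are permuted by $G$ and hence preserve invariance.

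Finally I would set
$$ Z = \{v \in \mcP \cap \D : \|v\| < \epsilon(\pi(v))\}, $$
which is open, $G$--invariant (as $\mcP$, $\D$, $\|\cdot\|$, and $\epsilon\circ\pi$ are all $G$--invariant), and contains the zero section. To check injectivity on $Z$, I would use that $d_X(\pi(v), f(v))\le \|v\| < \epsilon(\pi(v))$, so any two points of $Z$ with the same image have base points within a controlled $d_X$--distance; the slow variation of $\epsilon$ then places both points inside a single injectivity tube, forcing them to coincide. Being an injective local diffeomorphism, $f|_Z$ is a diffeomorphism onto the open set $V=f(Z)$, and it is $G$--equivariant since $Z$ and $V$ are $G$--invariant; this is the desired $G$--invariant tubular neighborhood.

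The step I expect to be the main obstacle is exactly the passage from pointwise, local injectivity to injectivity on a full neighborhood of the (possibly non-compact) zero section. In the finite-dimensional, compact case one argues by extracting convergent subsequences of hypothetical colliding points, but here neither $Y$ nor the fibers of $N(Y)$ are locally compact, so the compactness argument is unavailable and all control must be routed through the $G$--invariant metric $d_X$. Ensuring that the pointwise injectivity radius $r$ can be replaced by a genuinely continuous, slowly varying, and still $G$--invariant function $\epsilon$, and that slow variation indeed upgrades fiberwise/local injectivity to injectivity on the whole tube $Z$, is the technical heart of the argument.
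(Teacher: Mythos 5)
Your proposal is correct in spirit and shares the paper's central idea --- replacing the compactness arguments of the finite-dimensional setting by triangle-inequality control through an invariant distance function --- but it differs from the paper's proof in two genuine ways. First, the paper does not work with the pair (fiberwise norm on $N(Y)$, geodesic distance on $X$); it first builds an $\Isom(X)$--invariant Riemannian metric on the manifold $TX$ itself (via an equivariant splitting of $T(TX)$ coming from the spray) and uses the resulting invariant distance $d$ on $TX$ to measure, in one stroke, distances between normal vectors, between base points, and between a normal vector and a point of the zero section. Second, and more importantly, the paper never constructs a continuous invariant radius function on $Y$: it chooses radii $\epsilon_y$, $\epsilon'_y$ and an injectivity neighborhood $Z_y$ for \emph{one representative of each $G$--orbit only}, and defines $Z$ as the union of all $G$--translates $g(Z_{y(\mO)})$. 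Invariance is then automatic, no continuity or slow variation of $y\mapsto \epsilon_y$ is ever needed, and injectivity of $f$ on $Z$ is a short computation: if $f(g_1 z_1)=f(g_2 z_2)$ then (assuming $\epsilon_{y_1}\ge\epsilon_{y_2}$ without loss of generality) both $g_1 z_1$ and $g_2 z_2$ are forced into the single ball $B_{\epsilon_{y_1}}(g_1\cdot y_1, N(Y))$ on which $f$ is injective. Your route buys a radius function defined on all of $Y$, but at the cost of exactly the step you flag as the obstacle.

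That step --- producing a continuous, positive, slowly varying, $G$--invariant minorant $\epsilon\le r$ --- is left unproved in your write-up, and ``take a local infimum of $r$ over small balls'' does not by itself guarantee positivity: you must first show that $r$ is locally bounded below. It is, and nothing auxiliary is needed: directly from your definition of the tubes, if $f$ is injective on the $\delta$--tube at $y$ and $d_X(y,y')<\delta$, then the $(\delta-d_X(y,y'))$--tube at $y'$ is contained in it, so $r(y')\ge r(y)-d_X(y,y')$; hence $r$ is $1$--Lipschitz, continuous, positive, and $G$--invariant, and taking $\epsilon=r/8$ (say) lets your final triangle-inequality argument close. You should also record why the tubes $\{v: \|v\|<\delta,\ d_X(\pi(v),y)<\delta\}$ form a neighborhood basis of $0_y$ in $N(Y)$ (so that local injectivity actually gives $r(y)>0$) and why $d_X(\pi(v),f(v))\le\|v\|$ (the geodesic $t\mapsto f(tv)$ joining them has length $\|v\|$); both are routine for a strong metric but are doing real work in your argument.
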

\begin{proof} We identify $Y$ with the zero section of $N(Y)$, and for $g \in G$ we will let $g$ denote both the self-map it induces on $X$ and the derivative of this map. 

We begin by noting that the geodesic distance $d$ associated to the natural $\Isom$--invariant Riemannian metric on $TX$ constructed above yields an $\Isom(X)$--invariant distance function (topological metric) on $TX$ (see \cite[VII.6]{Lang-dg} for a discussion of the geodesic distance, and the fact that the associated metric topology is the usual topology on $TX$).  Since $G$ acts by isometries, the distance function $d$ is $G$--invariant.
For any $W\subset T(X)$ and any $w\in W$, we write $B_{\epsilon} (w, W) = \{w'\in W| d(w, w')<\epsilon\}$.  Note that we identify $X$ with the zero section of $T(X)$, so $B_\epsilon (x, X)$ is defined.

Now fix $y\in Y$. Since the exponential map $f$ restricts to a local diffeomorphism $\D\cap N(Y)\to X$
\cite[p. 109]{Lang-dg}, we know that for some $\epsilon_y>0$, $f$ restricts to a diffeomorphism $B_{\epsilon_y} (y, N(Y))\to f\left(B_{\epsilon_y} (y, N(Y))\right)$ (with $f\left(B_{\epsilon_y} (y, N(Y))\right)$ \emph{open} in $X$).  
Let $\psi\co f\left(B_{\epsilon_y} (y, N(Y))\right)\to B_{\epsilon_y} (y, N(Y))$ denote the inverse map.  Now, $f\left(B_{\epsilon_y/2} (y, N(Y))\right)$ is an open neighborhood of $y$ in $X$, hence contains $B_{\epsilon'_y}(y, X)$ for some $\epsilon'_y<\epsilon_y/4$.  Set $Z_y = \psi(B_{\epsilon'_y}(y, X))\subset N(Y)$.  Then $Z_y$ is open in $N(Y)$ and we have 
\begin{equation}\label{Z_y0}
Z_y \subset B_{\epsilon_y/2} (y, N(Y))
\end{equation}
and
$f(Z_y) = B_{\epsilon'_y}(y, X)$.  In fact, for any $g\in G$,
\begin{equation}\label{Z_y}
f(g (Z_y)) = B_{\epsilon'_y} (g\cdot y, X).
\end{equation}

Define $Z_{y(\mO)}$ as above for one point $y(\mO)$ from each $G$--orbit $\mO\subset Y$.  Then we claim that $f$ is injective, and hence restricts to a diffeomorphism, on the open set
$$Z = \bigcup_{\mO \in Y/G, \,\, g\in G} g(Z_{y(\mO)}).$$
Note here that in order to conclude that $f|_Z$ is a diffeomorphism onto its image, we are using the fact that $f\left(B_{\epsilon_y} (y, N(Y))\right)$ is \emph{open} in $X$; this also shows that $f(Z)$ is open in $X$.

Say $x = f(g_1 \cdot z_1) = f(g_2 \cdot z_2)$ with $z_1\in Z_{y_1}$, $z_2\in Z_{y_2}$ and $g_1$, $g_2\in G$ (here the $y_i$ are the chosen representatives for some two orbits).  Then by (\ref{Z_y}) 
we have
\begin{equation}\label{est}
d(g_1\cdot y_1, g_2 \cdot y_2) \leq d(g_1\cdot y_1, x) + d(x, g_2\cdot y_2) < \epsilon'_{y_1} + \epsilon'_{y_2} < \epsilon_{y_1}/4 + \epsilon_{y_2}/4.
\end{equation}
We may assume that $\epsilon_{y_1} \geq \epsilon_{y_2}$.  Then by (\ref{Z_y0}) and (\ref{est}) we have
\begin{equation}\label{est2}
\begin{split}
d(g_1 \cdot y_1, g_2 \cdot z_2) &\leqs d(g_1\cdot y_1, g_2\cdot y_2) + d(g_2\cdot y_2, g_2\cdot z_2) \\
&< (\epsilon_{y_1}/4 + \epsilon_{y_2}/4) + \epsilon_{y_2}/2
\leqs \epsilon_{y_1}.
\end{split}
\end{equation}
The fact that $f$ is injective on $B_{\epsilon_{y_1}} (y_1, N(Y))$ implies that it is also injective on $B_{\epsilon_{y_1}} (g_1 \cdot y_1, N(Y))$.  But $g_1 \cdot z_1 \in B_{\epsilon_{y_1}} (g_1 \cdot y_1, N(Y))$ by (\ref{Z_y0}), and $g_2 \cdot z_2 \in B_{\epsilon_{y_1}} (g_1 \cdot y_1, N(Y))$ by (\ref{est2}), so we have a contradiction.
\end{proof}

Before constructing a $G$--invariant total tubular neighborhood, we need a lemma which follows from the argument 
in Lang \cite[VII, Proposition 4.1]{Lang-dg}.

\begin{lemma}$\label{compression}$ Let $X$, $Y$, $Z$, and $G$ be as in Proposition~\ref{tubular}, and let $\sigma\co Y\to\bbR$ be a continuous $G$--invariant function such that for all $y\in Y$,
$\sigma(y) >0$ and 
$\{v\in N_y (Y) \, | \, \abs{v} < \sigma (y)\}$ lies inside $Z$.

Then there is a homeomorphism $N(Y) \to N(Y)_{\sigma}$, where 
$$N(Y)_{\sigma} = \{ v\in N(Y) \,\, |\,\, ||v||<\sigma(\pi v)\}$$
(here $\pi\co N(Y)\to Y$ is the projection, and $||\cdot||$ is the norm on $T_x (X)$).  This homeomorphism restricts to the identity on $Y\subset N(Y)$.
\end{lemma}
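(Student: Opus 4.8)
The plan is to compress the whole normal bundle onto the open disk bundle $N(Y)_{\sigma}$ by a fibrewise radial rescaling, using the fact that each fibre of $N(Y)$ is a Hilbert space. The model I would use is the radial homeomorphism of a Hilbert space $\Hilb$ onto its open ball of radius $r$ given by $v\mapsto \frac{r}{\sqrt{1+\|v\|^2}}\,v$; on each ray this is the strictly increasing bijection $t\mapsto rt/\sqrt{1+t^2}$ from $[0,\infty)$ onto $[0,r)$, so it is a homeomorphism fixing the origin. Letting the radius vary with the basepoint, I would define $h\co N(Y)\to N(Y)$ by
\[ h(v)=\frac{\sigma(\pi v)}{\sqrt{1+\|v\|^2}}\,v. \]

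Here $\|\cdot\|$ is continuous on $N(Y)$ (the metric on $T(X)$ varies smoothly) and $\sigma>0$ is continuous, so $h$ is a continuous, fibre-preserving map. For $v$ over $y$ one computes $\|h(v)\| = \sigma(y)\,\|v\|/\sqrt{1+\|v\|^2}<\sigma(y)$, so $h$ takes values in $N(Y)_{\sigma}$, and on the zero section $h$ is the identity. Because $\sigma$ is $G$--invariant, the norm is $G$--invariant, and each $g\in G$ acts linearly on the fibres, $h$ is moreover $G$--equivariant. The inverse is equally explicit: over $y=\pi w$, solving $\|h(v)\|=\|w\|$ along the ray through $w$ gives $\|v\|=\|w\|/\sqrt{\sigma(y)^2-\|w\|^2}$, whence
\[ h^{-1}(w)=\frac{1}{\sqrt{\sigma(\pi w)^2-\|w\|^2}}\,w. \]
On $N(Y)_{\sigma}$ we have $\|w\|<\sigma(\pi w)$, so the denominator is positive and $h^{-1}$ is well defined and continuous, vanishing on the zero section.

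The routine verifications are that $h$ and $h^{-1}$ are genuinely continuous as maps of total spaces and that they are mutual inverses; both reduce, via a local trivialisation of the smooth subbundle $N(Y)$, to the elementary scalar identities above together with continuity of $\sigma$ and of the fibre norm. The one point that needs care is the behaviour at the zero section, where the common radial factor must extend continuously across $\|v\|=0$ — this is why I would choose $\sqrt{1+\|v\|^2}$ rather than a factor with a bare $\|v\|$ in the denominator. Since $\sigma$ is only assumed continuous (not smooth), this construction yields a homeomorphism rather than a diffeomorphism, which is exactly what is claimed; it therefore restricts to the required homeomorphism $N(Y)\to N(Y)_{\sigma}$ fixing $Y$ pointwise.
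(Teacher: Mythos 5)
Your construction is correct and is essentially the argument the paper invokes: the paper proves this lemma by citing Lang \cite[VII, Proposition 4.1]{Lang-dg}, whose compression map is exactly your fibrewise rescaling $v\mapsto \sigma(\pi v)\,v/(1+\|v\|^2)^{1/2}$ with the inverse you write down. Your additional observation that the map is $G$--equivariant (from $G$--invariance of $\sigma$ and of the fibre norms) is not part of the lemma's statement but is precisely what is needed in the proof of Theorem~\ref{total-tubular}, so it is a worthwhile remark to make explicit.
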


We can now prove our main theorem.

\begin{theorem}$\label{total-tubular}$ Let $X$ be a Riemannian manifold, and let $G$ act on $X$ by isometries, leaving the submanifold $Y\subset X$ invariant.  Assume that $N(Y)$ is a smooth subbundle of $T(X)|_Y$.  Then there exists a $G$--invariant total tubular neighborhood $\tau(Y)$ of $Y$ in $X$.  The homeomorphism
$N(Y) \stackrel{\homeo}{\maps}  \tau(Y)$ is $G$--equivariant, but need not be smooth.
\end{theorem}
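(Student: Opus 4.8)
The plan is to upgrade the partial tubular neighborhood $V$ of Proposition~\ref{tubular} to a total one by radially stretching the fibers, as in the non-equivariant argument of Lang \cite[VII, Proposition 4.1]{Lang-dg}, but carrying out every step $G$--equivariantly. Recall that Proposition~\ref{tubular} produces a $G$--invariant open set $Z \subset \D \cap N(Y)$ containing the zero section, on which $f$ restricts to an equivariant diffeomorphism onto the open set $V$. The only remaining ingredient needed to invoke Lemma~\ref{compression} is a continuous, $G$--invariant function $\sigma\co Y \to \bbR_{>0}$ whose associated open disk bundle $\{v \in N_y(Y) : \abs{v} < \sigma(y)\}$ is contained in $Z$. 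Granting such a $\sigma$, Lemma~\ref{compression} supplies a homeomorphism $N(Y) \srt{\homeo} N(Y)_\sigma$ restricting to the identity on $Y$, and since $N(Y)_\sigma \subset Z$ the map $f$ restricts to an equivariant diffeomorphism of $N(Y)_\sigma$ onto an open neighborhood $\tau(Y) := f(N(Y)_\sigma)$ of $Y$ in $X$. Composing, $N(Y) \srt{\homeo} N(Y)_\sigma \srt{f} \tau(Y)$ is the desired $G$--equivariant homeomorphism, and it is total because its domain is all of $N(Y)$.

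The heart of the matter is the construction of $\sigma$, and this is where the non-compactness of $G$ must be circumvented: we cannot simply average a local choice of radius over the group. Instead I would exploit the $\Isom(X)$--invariant distance function $d$ on the total space $TX$ built in the previous subsection. Identifying $Y$ with the zero section and writing $0_y \in T_y(X)$ for the zero vector, set
\begin{equation*}
\sigma(y) = \min\{1,\; d(0_y,\, TX \setminus Z)\}.
\end{equation*}
Because $d$ is a topological metric inducing the usual topology on $TX$, the function $y \mapsto d(0_y, TX\setminus Z)$ is $1$--Lipschitz, hence continuous; it is strictly positive since $Z$ is open and contains the zero section; and it is $G$--invariant because both $d$ and $Z$ are $G$--invariant (so $TX \setminus Z$ is $G$--invariant and $g\cdot 0_y = 0_{g\cdot y}$). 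The truncation at $1$ keeps $\sigma$ finite and continuous while preserving these three properties.

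It remains to verify the containment $\{v \in N_y(Y) : \abs{v} < \sigma(y)\} \subset Z$, which I expect to be the one genuinely geometric point. The key observation is that the fiberwise norm dominates the ambient distance: along the radial curve $t \mapsto tv$ in the fiber $T_y(X) \subset TX$, the velocity is the vertical vector $\phi(tv, v)$, whose $T(TX)$--norm equals the fiber norm $\abs{v}$ by construction of the metric on $TX$, since the vertical subbundle $\ker(\alpha) = \mathrm{Im}(\phi)$ carries the pulled-back inner product and this norm depends only on the tangent slot $v$. Hence the radial curve from $0_y$ to $v$ has length $\abs{v}$, giving $d(0_y, v) \leqs \abs{v}$. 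Consequently, if $\abs{v} < \sigma(y) \leqs d(0_y, TX\setminus Z)$ then $d(0_y, v) < d(0_y, TX\setminus Z)$, so $v$ cannot lie in $TX\setminus Z$; that is, $v \in Z$. This verifies the hypothesis of Lemma~\ref{compression} and completes the construction.

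Finally, the resulting homeomorphism need not be smooth precisely because the compression map of Lemma~\ref{compression} is only a homeomorphism: radially rescaling $N(Y)$ onto the bounded disk bundle $N(Y)_\sigma$ fails to be differentiable along the zero section in general. Since the Thom isomorphisms in~(\ref{Thom}) require only a homeomorphism of the normal bundle onto a neighborhood, this loss of smoothness is immaterial for the intended applications.
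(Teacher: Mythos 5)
Your overall strategy matches the paper's: both proofs reduce to Lemma~\ref{compression}, and the real content in each case is producing a continuous, $G$--invariant, strictly positive $\sigma$ with $N(Y)_\sigma \subset Z$. The paper takes the fiberwise definition $\sigma(y) = \sup \{\epsilon>0 : \abs{v}<\epsilon \implies v\in Z\}$ and reads off positivity and invariance directly, whereas you extract $\sigma$ from the auxiliary distance function $d$ on $TX$; your radial-curve computation showing $d(0_y, v) \leqs \abs{v}$ (the vertical summand carries the pulled-back fiber inner product, so the curve $t\mapsto tv$ has speed $\abs{v}$) is correct and makes the containment $N(Y)_\sigma \subset Z$ transparent.

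However, there is a concrete error in your definition of $\sigma$: the set $Z$ produced by Proposition~\ref{tubular} is open in $N(Y)$, not in $TX$ (it lies in the subbundle $N(Y)$, which has empty interior in $TX$ whenever $Y$ is a proper submanifold). Consequently $TX\setminus Z$ accumulates at every point of the zero section --- it contains, for instance, all sufficiently small vectors in $T_y(Y)$ and all vectors based at points of $X\setminus Y$ near $y$ --- so $d(0_y, TX\setminus Z)=0$ and your $\sigma$ vanishes identically. The repair is immediate: set $\sigma(y) = \min\{1,\, d(0_y, N(Y)\setminus Z)\}$. Positivity then follows because $Z$ is open in $N(Y)$, contains $0_y$, and $d$ induces the usual topology on $TX$ (hence on the subspace $N(Y)$); continuity and $G$--invariance go through exactly as you argued, since $N(Y)\setminus Z$ is still $G$--invariant; and in the final step the inequality $d(0_y,v)\leqs \abs{v} < d(0_y, N(Y)\setminus Z)$ rules out $v\in N(Y)\setminus Z$, which together with $v\in N_y(Y)$ gives $v\in Z$. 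With this one change your argument is complete and constitutes a legitimate alternative to the paper's construction of $\sigma$.
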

\begin{proof}  We apply Lemma~\ref{compression}.  Set $\sigma(y) = \sup \{\epsilon>0 : ||v||<\epsilon \implies v\in Z\}$.
Then $\sigma(y)>0$ for any $y\in Y$, $\sigma$ is $G$--invariant, and 
we have $N(Y)_\sigma \subset Z$.    
Continuity of $\sigma$ follows from the assumption that the inner products defining $||\cdot||$ vary continuously over $X$, together with the fact that $Z$ is open.
\end{proof}

\begin{remark} If $G = \{1\}$ and $X$ admits partitions of unity, then Lang \cite[VII.4, Corollary 4.2]{Lang-dg} shows that one may choose $\sigma$ to be smooth, and then the homeomorphism $N(Y) \to \tau(Y)$ will be smooth as well.  If $Y$ consists of a single $G$--orbit, then $\sigma$ can be chosen to be \emph{constant}, and we obtain the same conclusion.
\end{remark}


\section{Applications to Yang--Mills Theory}$\label{YM}$

We now apply Theorem~\ref{total-tubular} to the Morse strata of the Yang--Mills functional.  Let $P\to M$ be a principal $U(n)$--bundle over a surface, and
let $\cA^{k-1}(P)$ denote the affine space of connections on $P$ lying in the Sobolev space 
$L^2_{k-1}$ ($k\geqs 2 1$).  Similarly, let $\cG^k(P)$ denote the $L^2_{k-1}$ Sobolev completion of the unitary gauge group of $P$, which acts smoothly on $\cA^{k-1} (P)$ (Wehrheim \cite{Wehrheim}).  Connections on $P$ correspond precisely to Hermitian connections on the associated Hermitian bundle $P\cross_{U(n)} \bbC^n$, and $\G^k(P)$ is isomorphic to the Sobolev gauge group $\G^k (E)$.  The following lemma will allow us to apply Theorem~\ref{total-tubular}.  A proof of this lemma may be found 
in Kondracki--Rogulski \cite[Section 2.3]{KR}.

\begin{lemma}\label{inv-metric}
Let $K$ be a compact connected Lie group and
let $P$ be a principal $K$--bundle over
a Riemann surface $M$ equipped with 
a Riemannian metric. The metric
on $M$ and a $K$--invariant inner product
on the Lie algebra $\fk$ of $K$ induce
a Riemannian metric on $\cA^{k-1}(P)$,
the space of $L^2_{k-1}$ $K$--connections on $P$. 
This Riemannian metric
is $\cG^k(P)$--invariant.
\end{lemma}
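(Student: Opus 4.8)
The plan is to exhibit the required metric as an explicit smooth family of inner products on the (trivial) tangent bundle of $\cA^{k-1}(P)$, designed so that gauge invariance is forced by the transformation law of the covariant derivative. First I would recall that $\cA^{k-1}(P)$ is an affine space modeled on the Hilbert space $V = L^2_{k-1}\left(\Omega^1(M; \ad P)\right)$, where $\ad P = P\times_K \fk$; thus $T_A \cA^{k-1}(P)$ is canonically identified with $V$ for every $A$, and a Riemannian metric is nothing more than a smooth assignment $A \mapsto \langle\,\cdot\,,\cdot\,\rangle_A$ of inner products inducing the $L^2_{k-1}$--topology on $V$. The two pieces of data in the hypothesis combine to a fiberwise inner product on the bundles $T^*M^{\otimes j}\otimes \ad P$: the Riemannian metric on $M$ gives inner products on the tensor powers of $T^*M$ together with a volume form $\mathrm{vol}_M$, while the $K$--invariant inner product on $\fk$ descends (precisely because it is $\Ad$--invariant) to a bundle metric on $\ad P$. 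Write $\langle\,\cdot\,,\cdot\,\rangle$ for the resulting pointwise pairing.

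Next I would define, for a connection $A\in\cA^{k-1}(P)$, the covariant derivative $\nabla_A$ on sections of $T^*M^{\otimes j}\otimes\ad P$, built from the Levi--Civita connection of $M$ on the $T^*M$ factors and from $A$ on $\ad P$, and set
\begin{equation*}
\langle \alpha, \beta\rangle_A \;=\; \sum_{j=0}^{k-1} \int_M \left\langle \nabla_A^{\,j}\alpha,\ \nabla_A^{\,j}\beta\right\rangle \, \mathrm{vol}_M .
\end{equation*}
The decisive design choice is to differentiate using $A$ itself rather than a fixed background connection; this is exactly what will make the metric gauge invariant. The Sobolev indices work out because $\nabla_A^{\,j}\alpha \in L^2_{k-1-j}$, so each summand is an honest $L^2$ pairing.

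With the definition in hand, gauge invariance becomes a short computation. A gauge transformation $g\in\cG^k(P)$ acts on connections by $A\mapsto g\cdot A$ and on tangent vectors by the pointwise adjoint action $\alpha\mapsto \Ad_g\alpha$, and the covariant derivative satisfies the equivariance identity $\nabla_{g\cdot A}(\Ad_g\alpha) = \Ad_g(\nabla_A\alpha)$ (the Levi--Civita part is untouched, since gauge transformations are vertical and fix the metric and volume form of $M$). Iterating gives $\nabla_{g\cdot A}^{\,j}(\Ad_g\alpha)=\Ad_g(\nabla_A^{\,j}\alpha)$, and since the fiber metric on $\ad P$ is $\Ad$--invariant, every summand of $\langle \Ad_g\alpha, \Ad_g\beta\rangle_{g\cdot A}$ equals the corresponding summand of $\langle\alpha,\beta\rangle_A$; hence $\langle\,\cdot\,,\cdot\,\rangle$ is $\cG^k(P)$--invariant. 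Smoothness in $A$ is likewise formal: writing $A = A_0 + a$ with $a\in V$, one has $\nabla_A = \nabla_{A_0} + [a,\,\cdot\,]$, so $\langle\alpha,\beta\rangle_A$ is a polynomial in $a$ of degree $\le 2(k-1)$ whose coefficients are bounded bilinear forms on $V$, and polynomial maps are smooth.

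The one genuinely analytic point --- and the step I expect to be the main obstacle --- is showing that $\langle\,\cdot\,,\cdot\,\rangle_A$ is a \emph{strong} metric, i.e.\ that the $A$--dependent norm is equivalent to a fixed $L^2_{k-1}$ reference norm on $V$. The upper bound, together with the boundedness of the bilinear forms above, requires the Sobolev multiplication theorems on the surface $M$ to control products such as $a\otimes\cdots\otimes a\otimes\alpha$; this is exactly where the limited regularity of an $L^2_{k-1}$ connection (with $k-1$ only just past the borderline $\tfrac{1}{2}\dim M = 1$) must be handled with care. The lower bound is the more delicate coercivity estimate, recovering the full $L^2_{k-1}$ norm from the covariant one by a G{\aa}rding/elliptic argument in which the zeroth-order term absorbs the difference between $\nabla_A$ and the flat derivative. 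All of these estimates are carried out in Kondracki--Rogulski \cite[Section 2.3]{KR}, which I would cite for the detailed verification.
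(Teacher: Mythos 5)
Your proposal is correct and follows essentially the same route as the paper, which offers no argument of its own and simply defers to Kondracki--Rogulski \cite[Section 2.3]{KR}; the covariant-Sobolev inner product $\sum_j \int_M \langle \nabla_A^j\alpha,\nabla_A^j\beta\rangle$ you construct, with gauge invariance forced by the equivariance $\nabla_{g\cdot A}(\Ad_g\alpha)=\Ad_g(\nabla_A\alpha)$, is precisely the construction carried out there. You correctly isolate the genuinely analytic point (uniform equivalence with a fixed $L^2_{k-1}$ norm via Sobolev multiplication on the surface) and appropriately cite \cite{KR} for it.
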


\vspace{.15in}

As discussed in the introduction, the Morse strata of the Yang--Mills functional can be ordered  $C_1\leqs C_2\leqs \cdots$ , starting with the central stratum, in such a way that the union $\C_n = \cup_{i = 1}^n C_i$ of each initial segment of the ordering is open.  Such an ordering was first discussed by Atiyah and Bott \cite{A-B}, and is described in more detail in Ramras \cite{Ramras-YM}.  We can now give our main application of Theorem~\ref{total-tubular}.

\begin{corollary}$\label{YM-tubular}$ Let $P\to M$ be a smooth principal $U(n)$--bundle over a surface $M$, and let $\leqs$ denote a linear order on the set of Morse strata as above.  Then each stratum $S$ has a $\G(E)$--invariant tubular neighborhood lying inside the open set $\C_n$, and there are Thom isomorphisms in gauge equivariant cohomology
$$H^*_{\G(E)} (C_n, C_{n-1}) \isom H^{*-\mathrm{codim} (\C_{n})}_{\G(E)} \left(\C_{n}\right).$$ 
If $M$ is orientable, or a non-orientable surface whose double cover has genus greater than 1, these isomorphisms hold with integer coefficients; in general they hold with $\bbZ/2$--coefficients.
\end{corollary}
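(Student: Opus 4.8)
The plan is to apply Theorem~\ref{total-tubular} with ambient manifold $X = \C_n$, group $G = \G(E)$, and submanifold $Y = S = C_n$, and then to convert the resulting invariant total tubular neighborhood into the asserted Thom isomorphism. First I would assemble the hypotheses of Theorem~\ref{total-tubular}. The space $\cA^{k-1}(P)$ is an affine space over a Hilbert space of Sobolev one-forms, hence a Hilbert manifold; equipping it with the $\G^k(P)$--invariant metric of Lemma~\ref{inv-metric}, which one verifies is \emph{strong} in the sense of Section~\ref{inv}, makes it a Riemannian manifold on which the gauge group $\G(E)\isom\G^k(P)$ acts by isometries. The open set $\C_n$ inherits this structure. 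Because the ordering is chosen so that $\C_{n-1}$ is open in $\C_n$, the stratum $S = \C_n \setminus \C_{n-1}$ is a $\G(E)$--invariant, locally closed submanifold of $\C_n$ (closed in $\C_n$). Thus Theorem~\ref{total-tubular} will produce a $\G(E)$--invariant total tubular neighborhood $\tau(S)\homeo N(S)$ inside $\C_n$, \emph{provided} the normal bundle $N(S)$ is a smooth subbundle of $T(\C_n)|_S$.

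Verifying that $N(S)$ is a smooth subbundle is the main obstacle, and the one point where input beyond Section~\ref{inv} is needed; as stressed in the introduction, this condition does not come for free in infinite dimensions. Here I would appeal to Daskalopoulos's analysis of the Yang--Mills stratification. Each stratum $S$ consists of the connections of a fixed Atiyah--Bott (Harder--Narasimhan) type, and Daskalopoulos shows that $S$ is a smooth $\G(E)$--invariant submanifold whose tangent spaces are the kernels of the linearized type conditions, varying smoothly over $S$. The normal directions are controlled by a smoothly varying family of elliptic operators along $S$; the essential point to extract is that the orthogonal complements $N_A(S) = T_A(S)^{\perp}$ have locally constant dimension and fit together into a smooth subbundle, giving a smooth splitting $T(\C_n)|_S \isom T(S)\oplus N(S)$. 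Granting this, the first assertion of the corollary follows immediately from Theorem~\ref{total-tubular}.

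Given the invariant total tubular neighborhood, the Thom isomorphism is formal. Since $S$ is closed in $\C_n$ with complement $\C_{n-1}$, excision together with the homeomorphism $\tau(S)\homeo N(S)$ (and totality, which identifies $\tau(S)\setminus S$ with $N(S)$ minus its zero section) yields
$$H^*_{\G(E)}(\C_n, \C_{n-1}) \isom H^*_{\G(E)}\bigl(\tau(S), \tau(S)\setminus S\bigr) \isom H^*_{\G(E)}\bigl(N(S), N(S)\setminus S\bigr).$$
Passing to the Borel construction, $E\G(E)\times_{\G(E)} N(S)$ is a vector bundle over $E\G(E)\times_{\G(E)} S$, and the equivariant Thom isomorphism identifies the last group above with $H^{*-\mathrm{codim}(\C_n)}_{\G(E)}(S)$, the degree shift being the fiber dimension $\mathrm{codim}(\C_n)$ of $N(S)$. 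This is the stated isomorphism. Note that only a homeomorphism $\tau(S)\homeo N(S)$ is needed here, since equivariant cohomology is a homeomorphism invariant; this is exactly why the possibly non-smooth homeomorphism of Theorem~\ref{total-tubular} suffices.

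Finally, the coefficient dichotomy comes down to orientability of the equivariant bundle $N(S)$. With $\bbZ/2$--coefficients the Thom isomorphism holds unconditionally, giving the general statement. With $\bbZ$--coefficients it requires a $\G(E)$--equivariant Thom class, i.e.\ an orientation of $E\G(E)\times_{\G(E)}N(S)$; this is precisely the orientability established by Ho--Liu--Ramras \cite{Ho-Liu-Ramras} for orientable $M$ and for non-orientable $M$ whose orientation double cover has genus greater than $1$, yielding the integral isomorphism in those cases. I expect the smoothness of the normal bundle in the second paragraph to be the substantive difficulty; once Daskalopoulos's structure theory and the Ho--Liu--Ramras orientation are in hand, the remaining steps are standard.
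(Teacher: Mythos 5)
Your overall strategy matches the paper's: apply Theorem~\ref{total-tubular} with the metric of Lemma~\ref{inv-metric}, then get the Thom isomorphism by excision and the Borel construction, with the coefficient dichotomy governed by orientability of the normal bundle. The last two paragraphs are essentially correct (one small point: for orientable $M$ the paper does not need \cite{Ho-Liu-Ramras} at all --- the normal bundles are naturally \emph{complex} by Atiyah--Bott, hence the homotopy orbit bundles are orientable; \cite{Ho-Liu-Ramras} is invoked only in the non-orientable case).

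The genuine gap is exactly where you flag "the substantive difficulty": you never actually prove that $N(S)$ is a smooth subbundle, and the sketch you give does not correspond to an argument. Saying that the orthogonal complements "have locally constant dimension and fit together into a smooth subbundle" is circular --- fitting together into a smooth subbundle is the thing to be shown, and in infinite dimensions locally constant (finite) dimension of the fibers does not give it to you. The paper's proof is concrete: it reduces to showing the orthogonal projections $T_A(\A^{k-1}(P)) \to T_A(C_j)$ vary smoothly, and then uses Daskalopoulos's explicit chart $f\colon S \times H^1(M,\End\,E) \to \A^{k-1}(P)$ at a critical point, under which the stratum corresponds to $S \times H^1(M,\End'\,E)$. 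In these coordinates the projection is the identity on the infinite-dimensional factor $S$ plus an orthogonal projection between the \emph{finite-dimensional} cohomology groups $H^1(M,\End\,E) \to H^1(M,\End'\,E)$ with respect to a smoothly varying pullback metric, whence smoothness; general points are handled by gauge transformations into a neighborhood of the critical set. Moreover, this argument only covers orientable $M$: for non-orientable $M$ the strata are by definition intersections of strata in $\A^{k-1}(\wt{P})$ with the fixed-point set of the involution $\tau$, and one must separately check (as the paper does, by choosing a deck-invariant metric upstairs and observing that orthogonal projection commutes with restriction to $\tau$-fixed subspaces) that the projection onto $TC$ is still a smooth bundle map. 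Your proposal omits the non-orientable case of the smoothness verification entirely.
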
  

\begin{proof} Since $\cA^{k-1} (P)$ is an affine space modeled on the vector space of $L^2_{k-1}$ sections of $\mathrm{ad} P \otimes T^*M$, this space is a Hilbert manifold, as are the open subsets $\bigcup_{i\leqs j} C_i$.  When $M$ is orientable, the Morse strata for the Yang--Mills functional on $P$ are locally closed submanifolds of $\cA^{k-1} (P)$ (Daskalopoulos \cite{Dask}) and are invariant under $\cG^k(P)$.  So existence of the desired neighborhoods will follow by applying Theorem~\ref{total-tubular} and Lemma~\ref{inv-metric} to the submanifolds $C_j \subset \bigcup_{i\leqs j} C_i$, once we show that the normal bundle $N(C_j)\subset T(\A^{k-1} (P))|_{C_j}$ is a smooth subbundle.  Since the kernel of a smooth, surjective map of vector bundles is a smooth subbundle of the domain (see, for example, Lang \cite[III.3]{Lang-dg}), it will suffice to show that the orthogonal projections $T_A (\A^{k-1} (P)) \to T_A (\C_j)$ vary smoothly with $A\in \C_j$.

To understand these orthogonal projections, we follow Daskalopoulos' proof that the strata are locally closed submanifolds \cite[Proposition 3.5]{Dask}.  Recall \cite[Section 5]{A-B} that connections on $P$ correspond bijectively with holomorphic structures on the associated vector bundle $\Ad\, P = P\cross_{U(n)} \bbC^n$, where we view a holomorphic structure as an operator on sections
$$d'': L^2_{k-1} (\Ad\, P) \to L^2_{k-1} (T^* M'' \otimes \Ad\, P).$$
Here $T^* M''$ is the space of $(0,1)$--forms on the complex manifold $M$.  Sections in the kernel of $d''$ are then \emph{holomorphic}.  We will call $\Ad\, P$, together with a $d''$ operator, a holomorphic bundle, which we denote by $E$.  Note that by adjointness, $d''$ induces a corresponding operator on the endomorphism bundle $\End (\Ad\, P) = \End\, E$.  

Given a critical holomorphic bundle $E\in C_j$ let $S$ denote the orthogonal complement of the space of $L^2_{k-1}$ holomorphic endomorphisms of $E$ inside the space of all smooth endomorphisms (that is, $S$ is the kernel of the $d''$ operator associated to $\End\, E$).  
Daskalopoulos constructs a map
$$f\co S \cross H^1 (M, \End \, E) \maps \A^{k-1} (P)$$
and shows that $f$ is an isomorphism near zero, meaning that 
$f\co U_0 \maps U_E$
is an isomorphism for some neighborhood $U_0$ of $0$ in the Hilbert space $S \cross H^1 (M, \End \, E)$ and some neighborhood $U_E$ of $E$.  Daskalopoulos then shows that 
$$f^{-1}(U_E \cap C_j) = U_0 \cap \left(S \cross H^1 (M, \End' \, E) \right),$$  
so that $f$ provides a chart for $C_j$ near $E$.
Here $\End' \, E$ is the sheaf of holomorphic endomorphisms preserving the Harder-Narasimhan filtration on $E$.  (The notation $\End' \, E$ comes from Atiyah--Bott \cite[Section 7]{A-B}; Daskalopoulos uses the notation $UT(E, *) = \End' \, E$, since these endomorphisms correspond, in a local ordered basis respecting the Harder-Narasimhan filtration, to upper triangular matrices.)

This chart provides an isomorphism between the tangent bundle to $U_E \cap C_j$ and the trivial bundle
$$\left( U_0 \cap \left(S \cross H^1 (M, \End' \, E)\right) \right) \cross (S\cross H^1 (M, \End' \, E)).$$ 
We need to understand the orthogonal projections 
$$S\cross H^1 (M, \End \, E) \maps S\cross H^1 (M, \End' \, E)$$ with respect to the \emph{pullback}, along $f$, of the metric on $\A^{k-1} (P)$ to the various tangent spaces $S\cross H^1 (M, \End \, E)$ (although these tangent spaces are all isomorphic to one another, the pullback metric will vary depending on the behaviour of the map $f$).   Observe that this orthogonal projection is simply the identity on $S$, plus an orthogonal projection $H^1 (M, \End \, E) \to  H^1 (M, \End' \, E)$.  These cohomology groups are finite dimensional, and since the metrics vary smoothly, so do the orthogonal projections. 

 Daskalopoulos produces the manifold structure at a general point $E\in C_j$ (as opposed to critical points) using a gauge transformation sending $E$ into a neighborhood of the critical set on which the manifold structure has already been established.  So local triviality of $N(C_j)$ at arbitrary points of $E$ follows from the result near the critical set.  This completes the proof of local triviality, in the case of an orientable surface.

When $M$ is non-orientable, we let $\wt{P}\to \wt{M}$ denote the pullback of $P$ to the orientable double cover $\wt{M} \to M$.  There is a natural embedding $\cG^k (P)\injects \cG^k(\wt{P})$, making the natural embedding
$\cA^{k-1} (P)\injects \cA^{k-1} (\wt{P})$ equivariant.  In both cases, the image of the embedding is the set of fixed points of an involution $\tau$ arising from the deck transformation on $\wt{M}$ (see Ho--Liu \cite{Ho-Liu-non-orient}).  The Morse strata in $\cA^{k-1} (\wt{P})$ are, by definition, just the intersections of Morse strata in $\cA^{k-1} (\wt{P})$ with $\cA^{k-1} (P)$, and hence are locally closed submanifolds invariant under $\cG^k (P) = \cG^k (\wt{P})^{\tau}$.  By Lemma~\ref{inv-metric}, we obtain a gauge-invariant metric on $\cA^{k-1} (\wt{P})$ from a choice of a metric on $\wt{M}$ and a $U(n)$--invariant inner product on $\fu (n)$.  We will choose our metric on $\wt{M}$ to be invariant under the deck transformation, so that the resulting metric on $\A^{k-1} (\wt{P})$ is invariant under $\tau$.
Now, the $\cG^k (\wt{P})$--invariant metric on $\cA^{k-1}(\wt{P})$ restricts to a $\cG^k (P) = \cG^k (\wt{P})^{\tau}$--invariant metric on $\cA^{k-1} (\wt{P}) = \cA^{k-1}(P)^{\tau}$, and it remains to check that the normal bundles are smooth subbundles $T(\A^{k-1} (P))$.  

Given a Morse stratum $C \subset \cA^{k-1} (P)$, it will suffice to show that the orthogonal projection $Q_C \co T \cA^{k-1} (P)\to TC$ is a smooth bundle map, since the kernel of this projection is precisely the normal bundle to $C$.  By definition, $C = \wt{C} \cap \cA^{k-1} (P)$ for some Morse stratum $\wt{C} \subset \cA^{k-1} (\wt{P})$.  Let $Q_{\wt{C}} \co T \cA^{k-1} (\wt{P}) \to T \wt{C}$ denote the orthogonal projection, which we have shown is a smooth bundle map.  Since the involution $\tau$ is an isometry, it follows that $Q_C$ is simply the restriction of the $Q_{\wt{C}}$ to $TC = (T\wt{C})^\tau \subset T\wt{C}$ (in general, one may check that if $W$ is a closed subspace of a Hilbert space $\Hilb$ and $\tau\co \Hilb \to \Hilb$ is a linear isometry, then the orthogonal projection $\Hilb^\tau \to W^\tau$ is the restriction of the orthogonal projection $\Hilb\to W$).  Hence $Q_C$ is a smooth bundle map as well.

To produce the desired Thom isomorphisms, note that over a Riemann surface, the normal bundles to the Yang--Mills strata are naturally complex vector bundles (Atiyah--Bott \cite[Section 7]{A-B}).  For any $G$--equivariant complex vector bundle $V\to X$, the homotopy orbit bundle $V_{hG}\to X_{hG}$ is still a complex vector bundle, hence orientable.
Letting $\nu_n$ denote a $\G(E)$--invariant tubular neighborhood of $C_n$ inside $\C_n$, we may excise the complement of $(\nu_n)_{h\G(E)}$ in $(\C_n)_{h\G(E)}$ and apply the ordinary Thom isomorphism for the bundle $(\nu_n)_{h\G(E)}\to (\C_n)_{h\G(E)}$.

The same argument works over a non-orientable surface, since it is proven in Ho--Liu--Ramras \cite{Ho-Liu-Ramras} that the normal bundles to the Yang--Mills strata (and the corresponding homotopy orbit bundles) are orientable (real) vector bundles (so long as the double cover of the surface has genus greater than 1).
\end{proof}


\section{Other constructions of tubular neighborhoods}$\label{background}$

\subsection{General constructions}

In order to put our construction in context, we briefly discuss existing general constructions of tubular neighborhoods for for submanifolds $Y\subset X$.  
There are many treatments of this topic in the literature, and the basic tool is always the exponential map, which is a local diffeomorphism from an open neighborhood $\D \subset N(Y)$ containing the zero section $Y\subset N(Y)$ to an open neighborhood of $Y$ in $X$.  
Since the proof of Godement's lemma \cite{Godement} first appeared, most arguments \cite{Bredon-transf, Brocker-Janich, GP, Hirsch, Lang-dg, MO, Mukherjee} rely on Godemont's idea produce a smaller neighborhood $Z\subset \D$ on which the exponential map is injective, hence a diffeomorphism (in \cite{BG, Milnor-collected-III, Spivak-I}, $Y$ is assumed to be compact, and a simplified version of Godement's lemma is used).  After this point, one argues that $N(Y)$ can be compressed into $Z$.  In order to construct invariant neighborhoods for compact groups, Bredon uses integration over the group to make this compression $G$--equivariant.  Surprisingly, we have not found the non-equivariant version of our argument, making use of a distance function on $T(X)$, elsewhere in the literature.

\subsection{Relationship with the work of Kondracki--Rogulski, Ebin, and Teleman}

Kondracki and Rogulski \cite{KR} provided a construction of gauge-invariant tubular neighborhoods of the gauge orbits inside the space $\cA^{k-1} (P)$ of (Sobolev) connections on a smooth principle $K$--bundle $P$ over a compact manifold $M$, where $K$ is a compact Lie group.  Following Ebin \cite{Ebin}, Kondracki and Rogulski utilize a weak gauge-invariant metric on the space of connections, and show that with respect to this weak metric, the orthogonal complement to $T(\G^k (P) \cdot A)$ inside $T(\cA^{k-1} (P))$ is a smooth subbundle (\cite[Lemma 3.3.1]{KR}).  However, their argument also shows that the orthogonal complement with respect to the strong metric is  a smooth subbundle.  Hence Theorem \ref{total-tubular} applies to this situation. 
 
Ebin \cite{Ebin} considers the space of Riemannian metrics on a fixed finite-dimensional manifold $M$, equipped with the action of the diffeomorphism group of $M$.  After passing to appropriate Sobolev completions of these objects, Ebin constructs diffeomorphism-invariant tubular neighborhoods for the orbits of the action.  His arguments make use of a weak Riemannian metric on the space of metrics, and it is only with respect to this weak metric that Ebin proves the normal bundles to orbits are smooth subbundles.  Ebin's arguments are somewhat subtler than those of Kondracki--Rogulski, and they do not directly apply to the strong Riemannian metric.  Hence our results do not immediately apply in this situation.  We note that Ebin uses his results in the Sobolev setting to derive results in the smooth setting.

A. Teleman \cite{Teleman-harmonic} gives an explicit construction
of a gauge-invariant tubular neighborhood for the space of connections $A$ on a Euclidean bundle satisfying the condition
$\mathrm{dim} (\ker (d_A)) = 1$.  Teleman's methods are quite different from ours, and it is unclear whether our results apply to this situation.


\end{document}